\newtheorem*{rep@theorem}{\rep@title}
\newcommand{\newreptheorem}[2]{%
\newenvironment{rep#1}[1]{%
 \def\rep@title{#2 \ref{##1}}%
 \begin{rep@theorem}}%
 {\end{rep@theorem}}}
\newtheorem*{rep@cor}{\rep@title}
\newcommand{\newrepcor}[2]{%
\newenvironment{rep#1}[1]{%
 \def\rep@title{#2 \ref{##1}}%
 \begin{rep@cor}}%
 {\end{rep@cor}}}
\newtheorem*{rep@prop}{\rep@title}
\newcommand{\newrepprop}[2]{%
\newenvironment{rep#1}[1]{%
 \def\rep@title{#2 \ref{##1}}%
 \begin{rep@prop}}%
 {\end{rep@prop}}}
\newtheorem{cor}{Corollary}[section]
\newtheorem{thm}[cor]{Theorem}
\newtheorem{prop}[cor]{Proposition}
\newtheorem{lemma}[cor]{Lemma}
\theoremstyle{definition}
\newtheorem{dfn}[cor]{Definition}
\theoremstyle{remark}
\newtheorem{remark}[cor]{Remark}
\newtheorem*{remark*}{Remark}
\newtheorem*{notation*}{Notation}
\newlist{steps}{enumerate}{1}
\setlist[steps, 1]{itemsep=8pt,leftmargin=0cm,itemindent=.5cm,labelwidth=\itemindent,labelsep=0cm,align=left,label = \textbf{\emph{Step \arabic*}:\,}}
\newcommand{\R}{{\mathbb R}}
\newcommand{\Hyp}{\mathbb{H}}
\newcommand{\SO}{\mathrm{SO}}
\begin{document}

\setcounter{secnumdepth}{2}
\setcounter{tocdepth}{2}

\title[A remark on one-harmonic maps to the hyperbolic plane]{A remark on one-harmonic maps from a Hadamard surface of pinched negative curvature to the hyperbolic plane}

\author[Fran\c{c}ois Fillastre]{Fran\c{c}ois Fillastre}
\address{Fran\c{c}ois Fillastre: CY Cergy Paris Universit\'e
Laboratoire AGM, UMR 8088 du CNRS, F-95000 Cergy, France}\email{francois.fillastre@cyu.fr}\address{
IMAG, Universit\'e de Montpellier, CNRS, Montpellier, France
}
 \email{francois.fillastre@umontpellier.fr}

\author[Andrea Seppi]{Andrea Seppi}
\address{Andrea Seppi: CNRS and Universit\'e Grenoble Alpes, 100 Rue des Math\'ematiques, 38610 Gi\`eres, France.} \email{andrea.seppi@univ-grenoble-alpes.fr}

%\date{\today}

\thanks{The second author is member of the national research group GNSAGA.}

\maketitle

\begin{abstract}
We show that every one-harmonic map, in the sense of Trapani and Valli, from a Hadamard surface of pinched negative curvature to $\Hyp^2$ has image the interior of the convex hull of a subset of $\partial_\infty\Hyp^2$. The proof relies on Minkowski geometry, by interpreting one-harmonic maps as the Gauss maps of convex surfaces.
\end{abstract}

\vspace{0.5cm}
\section{Introduction}

Energy-minimizing maps have played an important role in Teichm\"uller theory, and more generally in the study of negatively curved Riemannian surfaces. The classical $L^2$ energy between closed hyperbolic surfaces has been largely studied, see for instance \cite{zbMATH03295470,zbMATH03608406,zbMATH04069867,zbMATH00034325,zbMATH00011240,zbMATH00120199}, leading to important new descriptions of Teichm\"uller space. In the setting of universal Teichm\"uller space, the Schoen conjecture, formulated in \cite{zbMATH00421703} and proved independently in \cite{zbMATH06699459} and \cite{zbMATH06731860}, states that every quasisymmetric homeomorphism of $\partial_\infty\Hyp^2$ admits a quasiconformal harmonic extension to $\Hyp^2$. 
The result has been generalized in \cite{benhulII} for Hadamard manifolds, which we recall are complete simply connected Riemannian manifold with everywhere non-positive sectional curvature, under the assumption of pinched negative curvature (i.e. the curvature is bounded above and below by negative constants).
 Harmonic maps with image in subsets of $\Hyp^2$ have also been studied, see for instance \cite{zbMATH00856913,zbMATH05001913,zbMATH07024058}.

In \cite{zbMATH00870993}, Trapani and Valli introduced the notion of one-harmonic maps between closed surfaces of negative curvature, defined as the critical points of a holomorphic energy functional. See also \cite{graham}. We give Definition \ref{defi1har} in the more general non-compact setting. 

\begin{dfn}\label{defi1har}
Let $(\Sigma,g)$ and $(\Sigma',h)$ be oriented Riemannian surfaces, and let $F:\Sigma\to\Sigma'$ be an orientation-preserving local diffeomorphism. Then $F$ is a \emph{one-harmonic map} if for every open set $\Omega\subset \Sigma$ with compact closure on which $F$ is a diffeomorphism onto its image, $F$ is a critical point of the functional
$$F\mapsto \int_\Omega\|\partial F\|\mathrm{dArea}_g$$
among diffeomorphisms from $\Omega$ to $F(\Omega)$ that coincide with $F$ in the complement of a compact subset of $\Omega$. 
\end{dfn}

In Definition \ref{defi1har}, $\partial F$ denotes the $(1,0)$-part of the differential of $F$, computed with respect to the complex structures underlying $g$ and $h$, and $\|\partial F\|$ is its norm, computed with respect to $g$ and $h$. 

Trapani and Valli proved the existence and uniqueness of a one-harmonic map between two closed diffeomorphic Riemannian surfaces $(\Sigma,g)$ and $(\Sigma,h)$ of negative curvature, in a prescribed isotopy class.  When $g$ and $h$ are hyperbolic metrics, the one-harmonic maps are minimal Lagrangian, meaning that their graph is a minimal Lagrangian submanifold in the product. The existence of a minimal Lagrangian diffeomorphism between closed hyperbolic surfaces in a given isotopy class is due to Labourie \cite{labourieCP} and Schoen \cite{zbMATH00421703} independently.  This has been later generalized to the context of universal Teichm\"uller theory by Bonsante and Schlenker, showing that any quasisymmetric homeomorphism of $\partial_\infty\Hyp^2$ admits a unique quasiconformal minimal Lagrangian extension to $\Hyp^2$. See also \cite{zbMATH06902491,zbMATH07060418,zbMATH07127319,survey} for related results. 

In \cite[Corollary G]{zbMATH07087153} it was proved that the image of any minimal Lagrangian map from $\Hyp^2$ to itself is a straight convex domain, as in the following definition (see also Figure \ref{fig}).

\begin{dfn}
A \emph{straight convex domain} in $\Hyp^2$ is the interior of the (hyperbolic) convex hull of a subset of $\partial_\infty\Hyp^2$ containing at least three points.  
\end{dfn}

\begin{figure}[htb]
\centering
\includegraphics[height=4.5cm]{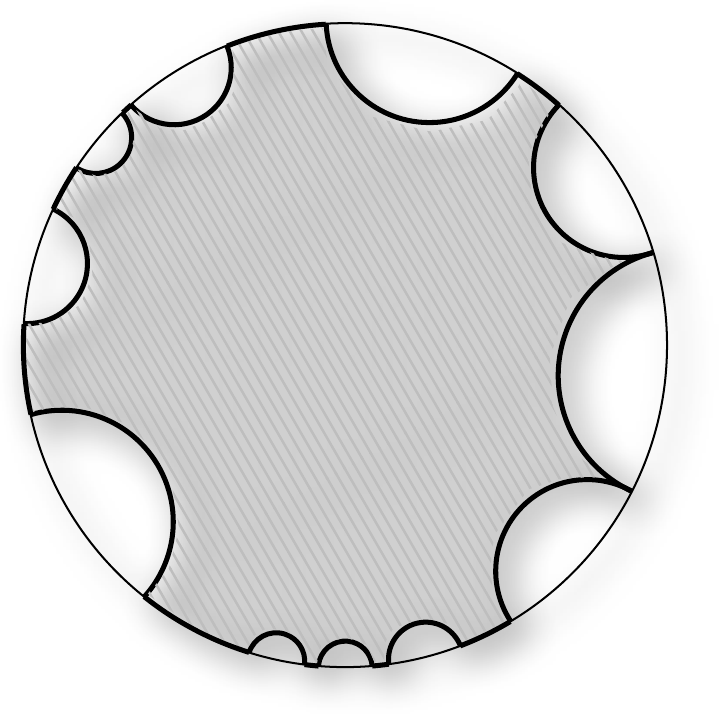}
\caption{A straight convex domain, in the Poincar\'e model of $\Hyp^2$. \label{fig}}
\end{figure}

We remark that $\Hyp^2$ itself is a straight convex domain. The purpose of this paper is to extend this result to one-harmonic maps defined on a Hadamard surface of pinched negative curvature, with target $\Hyp^2$. 

\begin{thm} \label{thm}
Let $(\Sigma,g)$ be a complete, simply connected Riemannian surface whose curvature satisfies $-c_1\leq K_g\leq -c_2$ for some constants $c_1,c_2>0$, and let $F:\Sigma\to\Hyp^2$ be a one-harmonic map. Then the image of $F$ is a straight convex domain in $\Hyp^2$ and $F$ is a diffeomorphism onto its image. 
\end{thm}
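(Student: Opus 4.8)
The plan is to recast the statement in the language of convex spacelike surfaces in Minkowski $3$-space $\M^{2,1}$, using the hyperboloid model $\Hyp^2=\{x\in\M^{2,1}:\inner{x,x}=-1,\ x_0>0\}$, in which the future unit timelike normal of a convex spacelike surface is exactly its Gauss map to $\Hyp^2$. The first and main step is to realize $F$ as such a Gauss map. Since $F$ is an orientation-preserving local diffeomorphism, the pullback $F^{\ast}h$ of the hyperbolic metric $h$ of $\Hyp^2$ is a metric on $\Sigma$, and together with $g$ it determines a $g$-self-adjoint, positive-definite operator $B:=(g^{-1}F^{\ast}h)^{1/2}$, whose eigenvalues $\kappa_1,\kappa_2>0$ are the singular values of $dF$. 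As $F$ preserves orientation one has $\|\partial F\|=\tfrac12(\kappa_1+\kappa_2)$ and $\det B=\kappa_1\kappa_2=\mathrm{Jac}(F)$, so the one-harmonic functional coincides with the total-mean-curvature functional $\int_\Omega\tfrac12\,\mathrm{tr}(B)\,\mathrm{dArea}_g$ of the surface one wishes to build. I would then show that the Euler--Lagrange equations of this functional are equivalent to the assertion that $(g,B)$ solves the Gauss--Codazzi system for a spacelike immersion into $\M^{2,1}$: that $B$ is a Codazzi tensor and that the Gauss equation $K_g=-\det B$ holds. In the model case where $g$ is hyperbolic this recovers the fact, recalled above, that one-harmonic maps are minimal Lagrangian, the Codazzi condition corresponding to minimality and the constraint $\det B=1$ to the Lagrangian (area-preserving) condition. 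Because $\Sigma$ is simply connected, the system integrates to a spacelike immersion $\sigma:\Sigma\to\M^{2,1}$ with first fundamental form $g$, shape operator $B$, and Gauss map $F$; positivity of $B$ makes $\sigma$ locally strictly convex, and the pinching $-c_1\le K_g\le -c_2$ becomes the uniform bound $c_2\le\det B\le c_1$ on the Gauss--Kronecker curvature.

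The second step is to read off the image of $F=N$ from the geometry of the convex surface $S:=\sigma(\Sigma)$. Completeness of $(\Sigma,g)$ together with uniform convexity should force $\sigma$ to be a proper embedding onto a complete convex surface asymptotic to the light cone of $\M^{2,1}$: the lower bound $\det B\ge c_2>0$ is precisely what prevents $S$ from flattening out and becoming asymptotic to a spacelike plane, which would otherwise contribute finite interior points to the closure of the Gauss image. One then identifies the set $\Lambda\subset\partial_\infty\Hyp^2$ of null directions of the lightlike support planes of the convex region bounded by $S$, and invokes the standard description of the Gauss map of such a surface (equivalently, of its domain of dependence): its image is exactly the interior of the hyperbolic convex hull of $\Lambda$. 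Since $N(S)$ is open and two-dimensional, $\Lambda$ has at least three points, so $N(S)$ is a straight convex domain in the sense of the definition above.

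For the diffeomorphism statement I would argue that $N:S\to N(S)$ is a bijective local diffeomorphism. It is a local diffeomorphism because $\det dN=\det B\ge c_2>0$, and it is injective because a complete, locally strictly convex spacelike surface bounds a strictly convex body, whose support plane with a prescribed timelike normal is unique. Composing with the identification $\sigma:\Sigma\xrightarrow{\sim}S$ shows that $F$ is a diffeomorphism onto the straight convex domain $N(S)$, which completes the proof.

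The delicate points, where I expect the real work to lie, are the two analytic inputs underlying the first two steps. The first is the exact verification that the one-harmonic Euler--Lagrange equation is equivalent to the Gauss--Codazzi system, together with the completeness of the resulting immersion, so that the hypotheses on $\Sigma$ transfer faithfully to $S$. The second is the asymptotic analysis guaranteeing that $S$ is asymptotic to the light cone and that $N(S)$ is bounded by complete geodesics---that is, that it is genuinely \emph{straight}, with neither curved arcs nor finite vertices in its boundary. The pinching, and in particular the strict negativity $K_g\le -c_2<0$, enters decisively at exactly this second point.
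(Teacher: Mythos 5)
Your proposal is correct and follows essentially the same route as the paper: characterize one-harmonicity by the Codazzi equation for $B=(g^{-1}F^*h)^{1/2}$ (with the Gauss equation $K_g=-\det B$ then forced because $F^*h$ has curvature $-1$), integrate $(g,B)$ to a convex entire spacelike graph in $\R^{2,1}$ using completeness, and invoke the pinched-curvature description of the Gauss image as the interior of the convex hull of a subset of $\partial_\infty\Hyp^2$ --- which is precisely Theorem \ref{thm gauss image}, the ``delicate point'' you rightly flag rather than a standard fact. The only imprecision is the phrase ``asymptotic to the light cone'': when the image is a proper straight convex domain the surface is asymptotic only to the lightlike support planes indexed by your set $\Lambda$, not to the whole cone, but your formulation of the conclusion in terms of $\Lambda$ already accounts for this.
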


When the source has constant curvature $-1$, we recover the aforementioned result of \cite{zbMATH07087153}. The main idea of the proof comes from interpreting one-harmonic maps from a negatively curved surface to $\Hyp^2$ as the Gauss maps of (convex) surfaces in Minkowski space. The idea of realizing energy-minimizing maps as Gauss maps in Minkowski space has been largely used before, for instance for harmonic maps (which correspond to surfaces of constant mean curvature) in \cite{zbMATH03844576,zbMATH00120216} and many others, and for minimal Lagrangian maps (which correspond to surfaces of constant Gaussian curvature), see for instance \cite{bss2}. 

\subsection*{Acknowledgements}  The second author is grateful to Francesco Bonsante and Graham Smith for discussions related to Minkowski geometry and one-harmonic maps. We thank Haruko Nishi and the organizers of the JMM workshop for the opportunity to publish this work.

%\subsection*{Acknowledgements}
\section{One-harmonic maps as Gauss maps in $\R^{2,1}$}

Recall that the three-dimensional \emph{Minkowski space} is the vector space $\R^3$ endowed with the Minkowski (Lorentzian) metric:
\begin{equation}\label{mink metric}
\langle x,y\rangle=x_1y_1+x_2y_2-x_3y_3~.
\end{equation}
We will consider the hyperbolic plane in the \emph{hyperboloid model}, namely:
$$\Hyp^2=\{x\in \R^{2,1}\,|\,\langle x,x\rangle=-1,x_3>0\}~.$$

Let $\sigma:\Sigma\to \R^{2,1}$ be a smooth immersion, for $\Sigma$ a surface (without boundary). In this paper, we will always assume that $\sigma$ is \emph{spacelike}, i.e. the first fundamental form $\sigma^*\langle\cdot,\cdot\rangle$ is a Riemannian metric on $\Sigma$. An example is given by $\Hyp^2$ itself, whose first fundamental form is a complete Riemannian metric of constant curvature $-1$.

The \emph{Gauss map} of a spacelike immersion $\sigma:\Sigma\to \R^{2,1}$ is the map $G_\sigma:\Sigma\to\Hyp^2$ that maps $p\in\Sigma$ to the future unit normal vector of $\sigma$ at $p$, which is an element of $\Hyp^2$. 
The first essential step in the proof is the following statement.

\begin{lemma}\label{lemma:realize}
Let $(\Sigma,g)$ be a simply connected Riemannian surface of negative curvature and let $F:\Sigma\to\Hyp^2$ be a one-harmonic map. Then there exists a spacelike immersion $\sigma:\Sigma\to \R^{2,1}$ such that:
\begin{itemize}
\item The first fundamental form of $\sigma$ is $g$.
\item The Gauss map of $\sigma$ is $F$.
\end{itemize}
\end{lemma}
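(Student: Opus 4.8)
The plan is to reconstruct the immersion $\sigma$ from its would-be first and second fundamental forms by invoking the fundamental theorem of surface theory for spacelike surfaces in $\R^{2,1}$, the only inputs being the prescribed metric $g$ and a shape operator extracted from $F$. Concretely, since $F$ is an orientation-preserving local diffeomorphism, the pullback $\III:=F^*g_{\Hyp^2}$ of the hyperbolic metric $g_{\Hyp^2}$ is a Riemannian metric on $\Sigma$; let $b$ be the $g$-self-adjoint, positive-definite endomorphism of $T\Sigma$ determined by $\III=g(b\,\cdot,b\,\cdot)$, i.e. the positive square root of $g^{-1}\III$. I would take $b$ as the shape operator and $\II:=g(b\,\cdot,\cdot)$ as the candidate second fundamental form, so that the third fundamental form of the surface we seek is exactly $\III$, the pullback of $g_{\Hyp^2}$ under $F$ — precisely the condition encoding that the Gauss map equals $F$.

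The heart of the argument, and the step I expect to be the main obstacle, is to show that one-harmonicity of $F$ is equivalent to $b$ being a Codazzi tensor for $g$, i.e. $d^{\nabla^g}b=0$. First, a pointwise singular-value computation identifies the integrand of the one-energy as $\|\partial F\|=\tfrac12\,\mathrm{tr}_g(b)$ (half the sum of the singular values of $dF$), so that $\int_\Omega\|\partial F\|\,\mathrm{dArea}_g=\tfrac12\int_\Omega\mathrm{tr}_g(b)\,\mathrm{dArea}_g$. Since this functional depends on $F$ only through $\III$, and since any compactly supported variation of $F$ through maps into $\Hyp^2$ varies $\III$ within metrics of constant curvature $-1$, computing the first variation and integrating by parts turns the Euler--Lagrange equation into a first-order divergence-type identity for $b$, which I expect to coincide exactly with the Codazzi equation $d^{\nabla^g}b=0$. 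The delicate points are the correct handling of the non-smoothness of $\|\cdot\|$ (harmless because $dF$ is invertible) and keeping track of the constraint that the target is $\Hyp^2$; this computation is essentially the one underlying the analysis of Trapani and Valli, and I would either reproduce it or cite it.

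With $b$ Codazzi in hand, the Gauss equation comes for free. Indeed $\III$, being the pullback of the curvature $-1$ metric $g_{\Hyp^2}$ under the local diffeomorphism $F$, has constant curvature $-1$; on the other hand, for a $g$-self-adjoint positive Codazzi endomorphism $b$ the curvatures of $g$ and of $g(b\,\cdot,b\,\cdot)$ are related by the tensorial identity $K_{g(b\cdot,b\cdot)}\cdot\det b=K_g$. Combining the two gives $-\det b=K_g$, i.e. the Gauss equation $K_g=-\det b$ for spacelike surfaces in $\R^{2,1}$. Thus the pair $(g,\II)$ satisfies both Gauss and Codazzi; since $\Sigma$ is simply connected, the fundamental theorem of surfaces produces a spacelike immersion $\sigma:\Sigma\to\R^{2,1}$, unique up to a global isometry of $\R^{2,1}$, with first fundamental form $g$ and shape operator $b$. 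Its third fundamental form is then $\III$, and $\det b=-K_g>0$ guarantees that $\sigma$ is convex and that its Gauss map is a local diffeomorphism.

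It remains to arrange that the Gauss map $N$ of $\sigma$ is $F$ itself, not merely a map with the same pullback metric. By construction $N^*g_{\Hyp^2}=\III=F^*g_{\Hyp^2}$, so $N$ and $F$ are two orientation-preserving local isometries from $(\Sigma,\III)$ into $\Hyp^2$. Fixing a basepoint $p_0$, I would choose $\phi\in\Isom^+(\Hyp^2)$ matching the $1$-jets of $N$ and $F$ at $p_0$; by rigidity of isometric immersions into the constant-curvature space $\Hyp^2$ together with connectedness of $\Sigma$, this forces $\phi\circ N=F$ everywhere. Extending $\phi$ to the corresponding linear isometry $\Phi\in\SO^+(2,1)\subset\Isom(\R^{2,1})$ preserving $\Hyp^2$, and replacing $\sigma$ by $\Phi\circ\sigma$ (which preserves the first fundamental form and shape operator, hence all the above), yields a spacelike immersion with first fundamental form $g$ and Gauss map exactly $F$, completing the proof.
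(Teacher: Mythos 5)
Your proposal is correct and follows essentially the same route as the paper: extract the positive $g$-self-adjoint root $B$ of $F^*h$, invoke the characterization of one-harmonicity as the Codazzi equation for $B$ (which the paper simply cites from Smith's work rather than rederiving variationally), deduce the Gauss equation from the curvature identity $K_{g(B\cdot,B\cdot)}\det B=K_g$, apply the fundamental theorem of surfaces, and adjust by a global isometry in $\SO_0(2,1)$ so that the Gauss map equals $F$. The only cosmetic difference is your jet-matching/rigidity phrasing of the last step versus the paper's locally-constant-map argument, which are the same idea.
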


The proof of Lemma \ref{lemma:realize} relies on the following characterization of one-harmonic maps. 
Recall that, given a connection $\nabla$ on a surface $\Sigma$ and a $(1,1)$-tensor $B$ on $\Sigma$, the \emph{exterior derivative} of $B$ is defined as:
$$d^\nabla \!B(v,w)=\nabla_v(B(w))-\nabla_w(B(v))-B[v,w]~.$$

\begin{prop}[{\cite[Corollary 3.3]{graham}}]\label{lemma:graham}
Let $(\Sigma,g)$ and $(\Sigma',h)$ be oriented Riemannian surfaces. An orientation-preserving local diffeomorphism $F:\Sigma\to\Sigma'$ is one-harmonic if and only if the unique $g$-self-adjoint, positive definite $(1,1)$-tensor $B$ such that $F^*h=g(B\cdot,B\cdot)$ satisfies the Codazzi equation 
$$d^{\nabla^g}\!B=0~.$$
\end{prop}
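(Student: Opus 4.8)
The plan is to reduce the variational condition to a pointwise tensorial identity in three stages: identify the integrand as a multiple of $\mathrm{tr}_gB$, compute the first variation to extract the Euler--Lagrange equation, and finally show in two dimensions that this equation is equivalent to $d^{\nabla^g}\!B=0$.

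\textbf{Step 1 (the integrand).} First I would compute $\|\partial F\|$ in terms of $B$. Fix $p\in\Sigma$ and a $g$-orthonormal basis $e_1,e_2=J_ge_1$ of eigenvectors of $B$ with eigenvalues $\mu_1,\mu_2>0$; then $f_i:=\mu_i^{-1}dF(e_i)$ is an $h$-orthonormal basis with $f_2=J_hf_1$ because $F$ is orientation-preserving, and $dF=\mathrm{diag}(\mu_1,\mu_2)$ in these adapted frames. A direct computation of the complex-linear part $\partial F=\tfrac12(dF-J_h\circ dF\circ J_g)$ gives $\partial F(e_i)=\tfrac12(\mu_1+\mu_2)f_i$, so $\partial F$ is $\tfrac12(\mu_1+\mu_2)$ times a complex-linear isometry, whence $\|\partial F\|=\tfrac12(\mu_1+\mu_2)=\tfrac12\mathrm{tr}_gB$. (Only proportionality to $\mathrm{tr}_gB$ by a positive constant matters, as constant factors do not affect critical points.) Thus, up to the factor $\tfrac12$, the functional is $\mathcal E(F)=\int_\Omega\mathrm{tr}_gB\,\mathrm{dArea}_g$. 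Moreover, since $F$ is a diffeomorphism onto its image on $\Omega$, every admissible competitor can be written $F\circ\phi_t$ for a compactly supported isotopy $\phi_t$ of $\Omega$ with $\phi_0=\mathrm{id}$, and $X:=\tfrac{d}{dt}\big|_{0}\phi_t$ then ranges over all compactly supported vector fields on $\Omega$.

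\textbf{Step 2 (first variation).} Write $\tilde g:=F^*h=g(A\cdot,\cdot)$ with $A=B^2$, so the competitor $F\circ\phi_t$ has pulled-back metric $\phi_t^*\tilde g$ with associated endomorphism $A_t$ satisfying $g(\dot A\cdot,\cdot)=L_X\tilde g$. Differentiating $B_t=A_t^{1/2}$ (using $\dot B B+B\dot B=\dot A$ and cyclicity) yields $\tfrac{d}{dt}\big|_0\mathrm{tr}_gB_t=\tfrac12\mathrm{tr}(B^{-1}\dot A)$, which I would rewrite as the pairing $\langle b,L_X\tilde g\rangle_g$ of the symmetric $2$-tensor $b:=g(B^{-1}\cdot,\cdot)$ with $L_X\tilde g$. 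Expanding $L_X\tilde g(Y,Z)=(\nabla^g_X\tilde g)(Y,Z)+\tilde g(\nabla^g_YX,Z)+\tilde g(Y,\nabla^g_ZX)$ and using $\nabla^gg=0$ together with the algebraic identities $B^{-1}B^2=B$ and $\mathrm{tr}(B^{-1}\nabla^g_XB^2)=2\,X(\mathrm{tr}_gB)$, the two contributions collapse to
\[
\langle b,L_X\tilde g\rangle_g=2\,X(\mathrm{tr}_gB)+2\,\mathrm{div}_g(BX)-2\,g\bigl(X,\mathrm{div}_gB\bigr),
\]
where $\mathrm{div}_gB:=\sum_i(\nabla^g_{e_i}B)e_i$. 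Integrating over $\Omega$, the exact divergence drops by the divergence theorem since $X$ is compactly supported, giving
\[
\delta\mathcal E(F)[X]=\int_\Omega g\bigl(\nabla^g(\mathrm{tr}_gB)-\mathrm{div}_gB,\;X\bigr)\,\mathrm{dArea}_g.
\]
By the fundamental lemma of the calculus of variations, $F$ is one-harmonic if and only if $\mathrm{div}_gB=\nabla^g(\mathrm{tr}_gB)$ at every point of $\Sigma$ (every point lies in such an $\Omega$ since $F$ is a local diffeomorphism).

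\textbf{Step 3 (from the divergence identity to Codazzi).} Finally I would prove the purely pointwise, two-dimensional equivalence
\[
\mathrm{div}_gB-\nabla^g(\mathrm{tr}_gB)=0\iff d^{\nabla^g}\!B=0.
\]
Working in a $g$-orthonormal frame with $\nabla^ge_i=0$ at the point and setting $C^{(k)}_{ij}:=g((\nabla^g_{e_k}B)e_i,e_j)$, which is symmetric in $i,j$ because $\nabla^g_{e_k}B$ is $g$-self-adjoint, a short computation shows that $\mathrm{div}_gB-\nabla^g(\mathrm{tr}_gB)$ equals $J_g$ applied to $d^{\nabla^g}\!B(e_1,e_2)$. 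Since $J_g$ is invertible and, on a surface, $d^{\nabla^g}\!B$ is determined by its single value $d^{\nabla^g}\!B(e_1,e_2)$, the left-hand side vanishes exactly when $d^{\nabla^g}\!B=0$, which closes the equivalence and proves the proposition.

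I expect the technical heart, and the main obstacle, to be Step 2: one must correctly handle the square-root tensor $B$ against $A=B^2$ and, above all, the mismatch between $g$ — which governs the area form, all inner products, and the divergence — and the metric $\tilde g=F^*h$ that is being Lie-differentiated. The simplifications $B^{-1}B^2=B$ and $\mathrm{tr}(B^{-1}\nabla^g_XB^2)=2\,X(\mathrm{tr}_gB)$ are precisely what make two a priori unrelated terms assemble into a clean gradient-minus-divergence, and keeping the contractions straight is the delicate point.
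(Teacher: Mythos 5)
Your proposal is correct, but there is a point of comparison you could not have known: the paper does not prove this proposition at all — it is imported verbatim from the cited source (Smith's Corollary 3.3, in the line of Trapani--Valli), so there is no internal proof to measure yours against. What you wrote is essentially the standard variational derivation that those sources give, and it checks out. In Step 1, the computation $\partial F(e_i)=\tfrac12(\mu_1+\mu_2)f_i$, hence $\|\partial F\|=\tfrac12\mathrm{tr}_gB$, is right, and you correctly note that only proportionality to $\mathrm{tr}_gB$ matters. In Step 2, both key contractions are valid: $\langle b,\nabla^g_X\tilde g\rangle_g=\mathrm{tr}(B^{-1}\nabla^g_XB^2)=2X(\mathrm{tr}_gB)$ by cyclicity of the trace, and the terms in $\nabla^g X$ contract to $2\,\mathrm{tr}\bigl((\nabla^g X)\circ B\bigr)=2\,\mathrm{div}_g(BX)-2\,g(X,\mathrm{div}_gB)$, where the last equality uses that $\nabla^g_YB$ is again $g$-self-adjoint; the only blemish is a dropped factor of $\tfrac12$ (one has $\frac{d}{dt}\big|_0\mathrm{tr}_gB_t=\tfrac12\langle b,L_X\tilde g\rangle_g$, not $\langle b,L_X\tilde g\rangle_g$), which is immaterial for the Euler--Lagrange equation. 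In Step 3, the pointwise identity $\mathrm{div}_gB-\nabla^g(\mathrm{tr}_gB)=J_g\bigl(d^{\nabla^g}\!B(e_1,e_2)\bigr)$ for a $g$-self-adjoint $(1,1)$-tensor on an oriented surface is exactly the classical two-dimensional equivalence between the divergence-form Euler--Lagrange equation and the Codazzi condition, and your frame computation establishes it. The logical bookkeeping — criticality on every admissible $\Omega$ is equivalent to vanishing first variation for all compactly supported $X$, which by the fundamental lemma is equivalent to the Euler--Lagrange equation at every point, since every point of $\Sigma$ lies in such an $\Omega$ — is also handled correctly, so the argument is a complete, self-contained proof of the quoted result.
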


Before proving Lemma \ref{lemma:realize}, let us make an additional remark.

\begin{remark}
The converse of Lemma \ref{lemma:realize} holds true, namely the following statement: given any spacelike immersion $\sigma:\Sigma\to \R^{2,1}$, for $\Sigma$ a simply connected surface, such that the first fundamental form $g$ of $\sigma$ is negatively curved, then   the Gauss map $G_\sigma:\Sigma\to\Hyp^2$ of $\sigma$ is one-harmonic with respect to $g$. 

Indeed, let $B$ denote the shape operator of $\sigma$, which is a $(1,1)$ symmetric tensor on $\Sigma$ defined by $B=dG_\sigma$. By the Gauss equation, $\det B=-K_g>0$, hence $B$ is either positive or negative definite. Up to changing the sign of $B$, which will not affect what follows, we can assume that $B$ is positive definite. The pull-back of the metric $h$ of $\Hyp^2$ is:
\begin{equation}\label{eq:pullback}
G_\sigma^*h=h(dG_\sigma\cdot,dG_\sigma\cdot)=g(B\cdot,B\cdot)~,
\end{equation}
where we implicitly identified $d\sigma(T_p\Sigma)$ and $dG_\sigma(T_p\Sigma)$ as linear subspaces of $\R^{2,1}$, and used that both $g$ and $h$ are the restriction of $\langle\cdot,\cdot\rangle$ to this subspace. 
Finally, by the Codazzi equation, $d^{\nabla^g}\!B=0$.
Hence $B$ satisfies the conditions of Proposition  \ref{lemma:graham}, which implies that $G_\sigma$ is one-harmonic. 
\end{remark}

Having established this remark, we shall now prove Lemma \ref{lemma:realize}.

\begin{proof}[Proof of Lemma \ref{lemma:realize}]
Let $F:\Sigma\to\Hyp^2$ be a one-harmonic map and let $B$ be the positive definite, $g$-self-adjoint $(1,1)$ tensor provided by Proposition \ref{lemma:graham}. The pair $(g,B)$ satisfies the Codazzi equation $d^{\nabla^g}\!B=0$; we claim that it also satisfies the Gauss equation in $\R^{2,1}$, namely $K_g=-\det B$. By \cite[Proposition 3.12]{zbMATH05200423}, the curvature of the metric $g(B\cdot,B\cdot)$ is
$$K_{g(B\cdot,B\cdot)}=\frac{K_g}{\det B}~,$$
since $B$ is $g$-Codazzi and invertible. But $g(B\cdot,B\cdot)=F^*h$, hence its curvature is identically $-1$. Therefore $K_g=-\det B$ as claimed. 

We have showed that the pair $(g,B)$ satisfies the Gauss-Codazzi equations in $\R^{2,1}$. Since $\Sigma$ is simply-connected, by the fundamental theorem of surfaces, there exists an immersion $\sigma:\Sigma\to\R^{2,1}$ whose first fundamental form is $g$ and whose shape operator is $B$. The Gauss map of $\sigma$ does not necessarily coincide with $F$, but it does up to post-composing $\sigma$ with an isometry of $\R^{2,1}$. We now formalize this, which is the last step of the proof. 

Observe that both $F$ and $G_\sigma$ are orientation-preserving local diffeomorphisms to $\Hyp^2$: $F$ by hypothesis, while $G_\sigma$ because its differential is identified with $B$ and satisfies $\det B=-K_g>0$. Moreover by Equation \eqref{eq:pullback}, if $h$ denotes the hyperbolic metric of $\Hyp^2$, we have
\begin{equation} \label{eq:pullbacks} 
G_\sigma^*h=g(B\cdot,B\cdot)=F^*h~.
\end{equation}
We claim that there exists an orientation-preserving isometry of $\Hyp^2$, say $A\in \SO_0(2,1)$, such that $F=A\circ G_\sigma$. This will permit us to conclude, by defining $\sigma'=A\circ \sigma$ (where we now think at $A$ as acting on $\R^{2,1}$) and observing that the first fundamental form of $\sigma'$ equals that of $\sigma$, namely $g$, and that the Gauss map of $\sigma'$ is $G_{\sigma'}=A\circ G_\sigma=F$.

The argument to prove the claim is quite standard. Since the pull-back metrics $G_\sigma^*h$ and $F^*h$ coincide, for every point $p\in\Sigma$ there exists a unique orientation-preserving linear isometry $A=A(p):T_{G_\sigma(p)}\Hyp^2\to T_{F(p)}\Hyp^2$ such that $dF_p=A\circ (dG_\sigma)_p$. Since every linear isometry between the tangent spaces at two points of $\Hyp^2$ uniquely extends to the whole $\Hyp^2$ (in fact, to $\R^{2,1}$), this defines a map from $\Sigma$ to $\SO_0(2,1)$.   But this map is locally constant, because every $p\in\Sigma$ has a neighbourhood $U_p$ on which both $F$ and $G_\sigma$ are diffeomorphisms onto their images, hence from \eqref{eq:pullbacks} we see that $F|_{U_p}=A(p)\circ G_\sigma|_{U_p}$. Since $\Sigma$ is connected, the map is constant, which means that $A\in \SO_0(2,1)$ does not depend on the point $p$.  \end{proof}

\section{Conclusion of the proof}

We are now ready to conclude the proof of Theorem \ref{thm}. Recall that, by hypothesis, the metric $g$ on $\Sigma$ is complete and $F:\Sigma\to\Hyp^2$ is a one-harmonic map; by Lemma \ref{lemma:realize} we realize $g$ as the first fundamental form of an immersion $\sigma$ whose Gauss map is $F$. First, we will apply the following statement (which actually holds in any dimension).

\begin{lemma}[{\cite[Lemma 3.1]{Bonsante}}]\label{lemma entire}
Let $\sigma:\Sigma\to\R^{2,1}$ be an immersion whose first fundamental form is a complete Riemannian metric. Then $\sigma$ is an embedding and its image is the graph of a function $f:\R^2\to\R$.
\end{lemma}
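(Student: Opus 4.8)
The plan is to study the orthogonal projection $\pi:\R^{2,1}\to\R^2$ onto the spacelike plane $\{x_3=0\}$ and to prove that $\pi\circ\sigma:\Sigma\to\R^2$ is a diffeomorphism; both conclusions of the lemma then follow immediately. First I would check that $\pi\circ\sigma$ is a local diffeomorphism. Since $\sigma$ is spacelike, at each point $p$ the tangent plane $d\sigma(T_p\Sigma)$ is a spacelike $2$-plane in $\R^{2,1}$, so it cannot contain a nonzero vector of the form $(0,0,v_3)$, which would be timelike. Hence $d\pi$ restricted to this plane is injective, and for dimensional reasons an isomorphism, so $\pi\circ\sigma$ is a local diffeomorphism.

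The key estimate is the following. For a tangent vector $w\in T_p\Sigma$, write $d\sigma(w)=(v_1,v_2,v_3)$. The first fundamental form reads $\|w\|_g^2=\langle d\sigma(w),d\sigma(w)\rangle=v_1^2+v_2^2-v_3^2$, whereas the pull-back of the Euclidean metric of $\R^2$ is $\|d(\pi\circ\sigma)(w)\|_{\mathrm{eucl}}^2=v_1^2+v_2^2$. Therefore
$$\|d(\pi\circ\sigma)(w)\|_{\mathrm{eucl}}^2=\|w\|_g^2+v_3^2\geq\|w\|_g^2~.$$
In other words, setting $\bar g=(\pi\circ\sigma)^*(\mathrm{eucl})$, we have $\bar g\geq g$ as quadratic forms on $\Sigma$.

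Next I would upgrade this local statement to a global one. Because $\bar g\geq g$, the length of every curve satisfies $L_{\bar g}\geq L_g$, hence $d_{\bar g}\geq d_g$; since $g$ is complete, a standard argument shows that $\bar g$ is complete as well (a $\bar g$-Cauchy sequence is $g$-Cauchy, hence $g$-converges to some point $x$, and near $x$ the two smooth metrics are comparable, so the sequence $\bar g$-converges). By construction $\pi\circ\sigma:(\Sigma,\bar g)\to(\R^2,\mathrm{eucl})$ is a local isometry from a complete Riemannian surface, hence a Riemannian covering map; as $\R^2$ is simply connected, $\pi\circ\sigma$ is a diffeomorphism.

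Finally, writing $\sigma=(\pi\circ\sigma,\,x_3\circ\sigma)$ and defining $f=(x_3\circ\sigma)\circ(\pi\circ\sigma)^{-1}:\R^2\to\R$, injectivity of $\pi\circ\sigma$ yields injectivity of $\sigma$, and the image of $\sigma$ is exactly the graph of $f$; since $\pi$ restricts to a homeomorphism from this graph onto $\R^2$, the immersion $\sigma$ is in fact an embedding. I expect the main obstacle to be precisely the passage from local to global diffeomorphism: the real content lies in transferring completeness from $g$ to $\bar g$ and then invoking the theorem that a local isometry out of a complete Riemannian manifold is a covering map. The spacelike estimate $\bar g\geq g$ is exactly what makes this machinery applicable, so everything hinges on that inequality.
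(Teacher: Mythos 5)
Your proposal is correct and is essentially the standard argument: the paper itself only quotes this lemma from \cite{Bonsante} without reproving it, and the proof given there proceeds exactly as you do, by showing that the vertical projection $\pi\circ\sigma$ is length-nondecreasing (hence a local isometry for the pulled-back metric $\bar g\geq g$, which inherits completeness from $g$) and is therefore a covering of the simply connected plane $\R^2$, hence a diffeomorphism. The only point worth making explicit is that $\Sigma$ must be connected for the final step, which holds in the paper's setting since $\Sigma$ is simply connected.
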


A surface in $\R^{2,1}$ which is the graph of a function $f:\R^2\to\R$ is also called \emph{entire}. Under the hypothesis of Theorem \ref{thm}, the curvature of $g$ is negative (in fact, is bounded above and below by negative constants). Since by the Gauss equation $K_g=-\det B$, this implies that $B$ is either positive definite or negative definite; up to applying a reflection is a horizontal plane, we can assume that $B$ is positive definite. This means that $im(\sigma)$ is locally strictly convex; since it is also entire, namely $im(\sigma)$ is the graph of $f:\R^2\to\R$, it follows that $f$ is a strictly convex function. Moreover $f$ is 1-Lipschitz, because $\sigma$ is spacelike. 

We are now going to apply a result from \cite{zbMATH07087153}, which we will need to interpret in terms of the Gauss map. For this purpose, we first need to recall the Legendre transformation. We provide the definition only in the special setting of our interest. Given a smooth convex function $f:\R^2\to\R$, its Legendre transformation $f^*$ is defined on $\R^2$ as
$$f^*(\mathsf y)=\sup_{\mathsf x\in\R^2}\mathsf x\cdot \mathsf y-f(\mathsf x)~,$$
where $\mathsf x\cdot \mathsf y$ denotes the standard scalar product of $\R^2$.
It is well-known that $f^*$ is a lower semicontinuous convex function with values in $\R\cup\{+\infty\}$. Observe that $f^*(\mathsf y)<+\infty$ if and only if there exists some constant $C$ such that $f(\mathsf x)>\mathsf x\cdot\mathsf y+C$ for every $\mathsf x\in\R^2$. Using the expression \eqref{mink metric} of the Minkowski metric, this is equivalent to saying that $im(\sigma)$ has a support plane orthogonal to the vector $(\mathsf y,1)$. For instance, if $\mathsf y=Df(\mathsf x)$, then $f^*(\mathsf y)<+\infty$, because from the metric \eqref{mink metric} one sees that the tangent plane to $im(\sigma)$ at the point $(\mathsf x,f(\mathsf x))$ is orthogonal to the vector $(Df(\mathsf x),1)$. 

Now, since $\sigma$ is spacelike, or equivalently $f$ is smooth and $1$-Lipschitz, it is not hard to see that the essential domain of $f^*$, namely the set 
$$\mathrm{ess}(f^*):=\{\mathsf y\,|\,f^*(\mathsf y)<+\infty\}~,$$ is contained in the closed unit disc in $\R^2$, and moreover its interior is the image $Df(\R^2)$ of the gradient mapping of $f$. The essential domain $\mathrm{ess}(f^*)$ is described in the following theorem (see also \cite[Theorem A']{xin} for a related statement).

\begin{thm}[{\cite[Theorem 4.4]{zbMATH07087153}}]\label{thm gauss image}
Let $\sigma:\Sigma\to\R^{2,1}$ be a spacelike convex entire embedding whose first fundamental form $g$ has pinched negative curvature, that is there exist constants $c_1,c_2>0$
such that $-c_1\leq K_g\leq -c_2$. Let $f:\R^2\to\R$ be the function whose graph is $im(\sigma)$. Then $\mathrm{ess}(f^*)$ is the convex hull in $\R^2$ of the set of points of $\mathbb S^1$ on which $f^*$ is finite. 
\end{thm}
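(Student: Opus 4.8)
The plan is to establish the two inclusions separately, writing $E:=\mathrm{ess}(f^*)$ and $S:=E\cap\mathbb{S}^1$ for the set of points of the unit circle on which $f^*$ is finite. The inclusion $\mathrm{conv}(S)\subseteq E$ is free: $S\subseteq E$ by definition and $E$ is convex, being the essential domain of the convex function $f^*$. The entire content is thus the reverse inclusion $E\subseteq\mathrm{conv}(S)$, and since $E$ is a bounded convex subset of the closed unit disc I would reduce it to the assertion that \emph{$E$ has no extreme point in the open disc}. Indeed, any point of $E$ lying on $\mathbb{S}^1$ is automatically an extreme point of $E$ and belongs to $S$; so if all extreme points of $E$ lie on $\mathbb{S}^1$, the Krein--Milman theorem gives $E=\mathrm{conv}(\mathrm{ext}\,E)\subseteq\mathrm{conv}(S)$. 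A technical point to dispatch along the way is that $E$ is closed and $S$ compact, so that $\mathrm{conv}(S)$ is itself closed; I expect the curvature bounds to furnish this, and would treat it as a normalization rather than a difficulty.

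It is convenient to encode $\partial E$ through the recession (support) function $h_E(\mathsf v)=f^\infty(\mathsf v)=\lim_{t\to+\infty}f(t\mathsf v)/t$, which is finite, convex and positively $1$-homogeneous, with $f^\infty\le|\cdot|$ because $f$ is $1$-Lipschitz. In this language an extreme point of $E$ lying in the open disc is exactly a direction $\mathsf v_0$ at which $f^\infty$ is differentiable with $\nabla f^\infty(\mathsf v_0)=\mathsf y_0$, $|\mathsf y_0|<1$, and at which $f^\infty$ fails to be affine --- i.e.\ a point where $\partial E$ has a strictly convex boundary arc strictly inside the disc. Geometrically, since $\mathrm{int}(E)=Df(\R^2)$ is the radial projection of the Gauss image $G_\sigma(\Sigma)\subset\Hyp^2$, such an arc corresponds to a one-parameter family of \emph{distinct, spacelike} support planes of $im(\sigma)$ whose future normals $\nu=(1-|\mathsf y|^2)^{-1/2}(\mathsf y,1)$ sweep out an arc in the \emph{interior} of $\Hyp^2$; equivalently, $im(\sigma)$ would carry a smooth strictly convex spacelike ``cap at infinity''. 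The goal of the hard inclusion is to show that pinched negative curvature forbids this.

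This last step is where I expect the main obstacle. The strategy is asymptotic: I would analyse the blow-down limit of $im(\sigma)$, i.e.\ the limits of the rescaled surfaces $\tfrac1R\,im(\sigma)$ as $R\to+\infty$, which should recover the asymptotic cone of $\sigma$ --- the boundary of the maximal domain of dependence $\mathcal D\subset\R^{2,1}$ containing $im(\sigma)$. The crux is to prove, using both bounds $-c_1\le K_g\le -c_2$, that this boundary is \emph{piecewise affine}: it is ruled by lightlike planes accumulating at the ideal points of $S\subset\partial_\infty\Hyp^2$ and meets each spacelike support plane along a full chord joining two such ideal points, with \emph{no} smooth strictly convex spacelike piece. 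The two bounds should play complementary roles: completeness together with the lower bound $K_g\ge -c_1$ should yield the existence and convexity of the blow-down limit (preventing collapse and keeping the surface at bounded distance from $\partial\mathcal D$), while the upper bound $K_g\le -c_2<0$ should force the surface to realise the maximal (lightlike) asymptotic slope in every non-vertex direction, thereby excluding strictly convex spacelike asymptotics. Making this rigorous --- the existence and structure of the blow-down limit, together with uniform a priori estimates for the Monge--Amp\`ere type Gauss-curvature equation satisfied by $f$ depending only on $c_1,c_2$ --- is the technical heart of the proof.

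Granting the above, the radial projection of the asymptotic cone $\partial\mathcal D$ is exactly $\mathrm{conv}(S)$: its flat faces project to the chords of $\partial(\mathrm{conv}\,S)$ inside the disc and its lightlike rulings to the ideal points of $S$. Since this projection also equals $\overline{E}$, we obtain $E\subseteq\mathrm{conv}(S)$ and hence, together with the easy inclusion, $E=\mathrm{conv}(S)$. I would close by returning to the postponed regularity point, checking that $S$ is compact and $E$ closed, so that the equality holds on the nose.
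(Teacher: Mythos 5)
This statement is not proved in the paper at all: it is imported verbatim as Theorem~4.4 of the cited reference \cite{zbMATH07087153}, so there is no internal proof to compare your argument against. Judged on its own terms, your proposal has a sound outer skeleton --- the inclusion $\mathrm{conv}(S)\subseteq E$ is indeed immediate from convexity of the essential domain, and the reduction of the reverse inclusion to ``$E$ has no extreme point in the open disc'' via Minkowski/Krein--Milman is a correct and natural reformulation (it is essentially the standard duality between the recession function $f^\infty$, which is the support function of $\overline{E}$, and the conjugate $f^*$). The geometric picture you draw, identifying the blow-down of $im(\sigma)$ with the boundary of the domain of dependence and the forbidden configuration with a ``strictly convex spacelike cap at infinity,'' is also the right picture and is in the spirit of how such results are proved in the Minkowski-geometry literature.

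The problem is that the entire mathematical content of the theorem is concentrated in the step you explicitly defer: showing that the pinched curvature bounds force the blow-down to be ruled by lightlike planes over $S$, with no spacelike strictly convex piece. As you yourself note, proving the existence and the claimed structure of the blow-down limit \emph{is} the theorem --- $f^\infty$ being the support function of $\overline{E}$, the assertion ``the asymptotic cone is the lightlike cone over $S$'' is equivalent to $\overline{E}=\mathrm{conv}(\overline{S})$ --- so ``granting the above'' grants the conclusion, and no actual mechanism is supplied by which $-c_1\le K_g\le -c_2$ excludes, say, a surface whose Gauss image stays in a compact subset of $\Hyp^2$ (e.g.\ graphs asymptotic to a spacelike cone, which satisfy every hypothesis except the upper curvature bound and for which $S=\emptyset$ while $E\neq\emptyset$). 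A genuine proof must engage quantitatively with the Monge--Amp\`ere equation $\det D^2f=(-K_g)(1-|Df|^2)^{2}$ (or with comparison surfaces of constant curvature) to convert the curvature pinching into asymptotic information on $Df$; none of that is present. There is also a secondary unaddressed point: even granting $\overline{E}=\mathrm{conv}(\overline{S})$, the statement concerns $E$ itself, so you must show $f^*$ is actually finite at the relevant circle points and that $E$ is closed --- the essential domain of a lower semicontinuous convex function need not be closed, and this cannot be dismissed as a normalization.
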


From the discussion preceding Theorem \ref{thm gauss image}, the unit normal vector of the immersion $\sigma$ at a point $p$ is proportional to the vector $(Df(\mathsf x),1)$, where $\mathsf x$ is the vertical projection to $\R^2$ of $\sigma(p)$. This shows that, if $\pi:\Hyp^2\to\mathbb D^2$ denotes the radial projection from the hyperboloid to the open unit disc at height 1 in $\R^{2,1}$ (namely the Klein model of the hyperbolic plane), then $Df(\R^2)=\pi\circ G_\sigma(\Sigma)$, where $G_\sigma:\Sigma\to\Hyp^2$ is the Gauss map of $\sigma$ as usual. In conclusion, we summarize the proof as follows:

\begin{proof}[Proof of Theorem \ref{thm}]
Let $F:\Sigma\to\Hyp^2$ be a one-harmonic map from a simply connected surface and suppose that the metric $g$ on $\Sigma$ is complete and has pinched negative curvature. By Lemma \ref{lemma:realize} there exists an immersion $\sigma:\Sigma\to\R^{2,1}$ having first fundamental form $g$ and Gauss map $F$. 

The Gauss map $F$ is a local diffeomorphism because the determinant of its differential, which equals the opposite of the curvature of $g$, does not vanish. By Lemma \ref{lemma entire}, $im(\sigma)$ is entire, and (up to applying a reflection in the horizontal plane) is the graph of a strictly convex smooth function $f:\R^2\to\R$. This implies also that $F$ is injective, by a standard argument: if $\sigma$ had the same unit normal vector at two different points, by convexity its image would contain the segment connecting the two points, but this would contradict strict convexity of $f$. Hence by the invariance of domain, $F$ is a diffeomorphism onto its image. 

By the discussion in this section, the image of the Gauss map of $\sigma$ equals $Df(\R^2)$, which is the interior of $\mathrm{ess}(f^*)$. By the assumption of pinched negative curvature and Theorem \ref{thm gauss image}, $\mathrm{ess}(f^*)$ is the convex hull of a subset of $\mathbb S^1=\partial_\infty\Hyp^2$. Hence the image of $F$ is the interior of such convex hull (and the subset of $\mathbb S^1$ necessarily contains at least three points, because the Gauss map $F$ is a diffeomorphism onto its image, hence its image has non-empty interior). This concludes the proof. 
\end{proof}

\bibliographystyle{alpha}
\bibliographystyle{ieeetr}
\bibliography{biblio.bib}

\end{document}